\numberwithin{equation}{section}
  \declaretheorem[name=Theorem,within=section]{theorem}
  \declaretheorem[name=Lemma,sibling=theorem]{lemma}
  \declaretheorem[name=Definition,sibling=theorem,style=definition]{definition}
\definecolor{ugentblue}{RGB}{36,71,127}
\let\blx@noerroretextools\@empty
\let\etoolboxforlistloop\forlistloop
\let\forlistloop\etoolboxforlistloop
\newcommand\foreign[1]{#1}
\newcommand\xperiod{\@ifnextchar.{}{.\@}}
\newcommand\ie{\foreign{i.e.}\xspace}
\newcommand\resp{\foreign{resp}\xperiod\xspace}
\newcommand\etc{\foreign{etc}\xperiod\xspace}
\newcommand*\dash{\nobreakdash-\hspace{0pt}}
\DeclarePairedDelimiter\angles{\langle}{\rangle}
\DeclarePairedDelimiterX\Set[2]{\lbrace}{\rbrace}{\,#1\,\delimsize\vert\,#2\,}
\newcommand{\1}{\bm{1}}
\definecolor{darkgreen}{RGB}{0,128,0}
\tikzset{
  edge/.style={ultra thick},
  edge0/.style={thin,red},
  edge1/.style={thick,darkgreen},
  vertex/.style={circle,fill,scale=0.5},
  vertex0/.style={vertex,red},
  vertex1/.style={vertex,darkgreen}
}
\title{Local Orientation-Preserving Symmetry Preserving Operations on Polyhedra}
\date{}
\author{
Pieter Goetschalckx \\
\footnotesize Ghent University \\
\footnotesize Krijgslaan 281-S9 \\
\footnotesize 9000 Ghent, Belgium \\
\footnotesize \url{pieter.goetschalckx@ugent.be}
\and
Kris Coolsaet \\
\footnotesize Ghent University \\
\footnotesize Krijgslaan 281-S9 \\
\footnotesize 9000 Ghent, Belgium \\
\footnotesize \url{kris.coolsaet@ugent.be}
\and
Nico Van Cleemput \\
\footnotesize Ghent University \\
\footnotesize Krijgslaan 281-S9 \\
\footnotesize 9000 Ghent, Belgium \\
\footnotesize \url{nico.vancleemput@gmail.com}
}
\begin{document}

\maketitle
\thispagestyle{empty}
\begin{tikzpicture}[remember picture,overlay]
\node[yshift=5em] at (current page.south) {\begin{minipage}{29.5em}\doclicenseThis\end{minipage}};
\end{tikzpicture}

\begin{abstract}
  \noindent Unifying approaches by amongst others Archimedes, Kepler, Goldberg, Caspar and Klug, Coxeter, and Conway, and extending on a previous formalisation of the concept of local symmetry preserving (lsp) operations, we introduce a formal definition of local operations on plane graphs that preserve orientation-preserving symmetries, but not necessarily orientation-reversing symmetries. This operations include, e.g., the chiral Goldberg and Conway operations as well as all lsp operations. We prove the soundness of our definition as well as introduce an invariant which can be used to systematically construct all such operations. We also show sufficient conditions for an operation to preserve the connectedness of the plane graph to which it is applied.
\end{abstract}

\section{Introduction}

Symmetry preserving operations on polyhedra have a long history -- from Plato and Archimedes to Kepler \cite{Kepler1619}, Goldberg \cite{Goldberg1937}, Caspar and Klug \cite{Caspar1962}, Coxeter \cite{Coxeter1971}, Conway \cite{Conway2008}, and many others. Notwithstanding their utility, until recently we had no unified way of defining or describing these operations without resorting to ad-hoc descriptions and drawings. In \cite{Brinkmann2017} the concept of local symmetry preserving operations on polyhedra (\emph{lsp operations} for short) was introduced. These replace each chamber in the barycentric sudbivision of a polyhedron with the same patch, which results in a new polyhedron while preserving the original symmetries. This established a general framework in which the class of all lsp operations can be studied, without having to consider individual operations separately. It was shown that many of the most frequently used operations on polyhedra fit into this framework.

However, some notable operations were not included. Most Goldberg operations and some of the extended Conway operations -- like \emph{snub} (see \cref{fig:snub}), gyro, propeller, \etc\ -- are chiral, so they only preserve orientation-preserving symmetries. In order to also cover these, we can generalize lsp operations by decorating double chambers instead of single chambers, similar to what Goldberg did in \cite{Goldberg1937} for Goldberg operations. We call these local orientation-preserving symmetry preserving (\emph{lopsp}) operations. In this paper, we formalize this approach for lopsp operations as \cite{Brinkmann2017} did for lsp operations.

\begin{figure}[htp]
  \centering
  \includegraphics[width=0.3\textwidth]{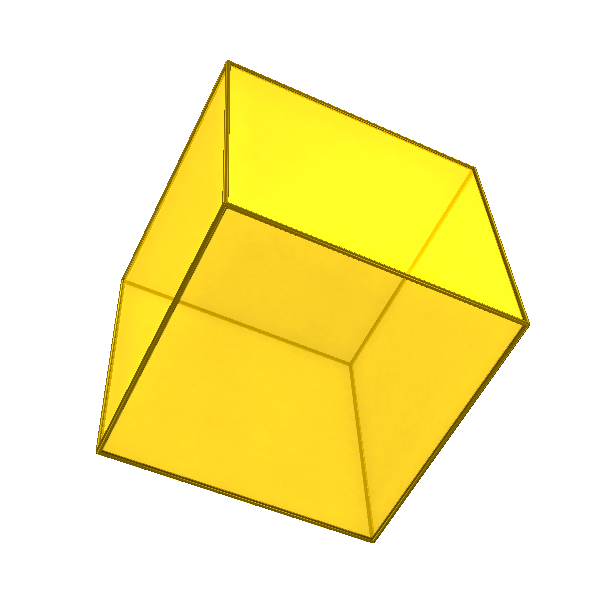}
  \raisebox{0.125\textwidth}{{\huge $\quad\rightarrow\quad$}}
  \includegraphics[width=0.3\textwidth]{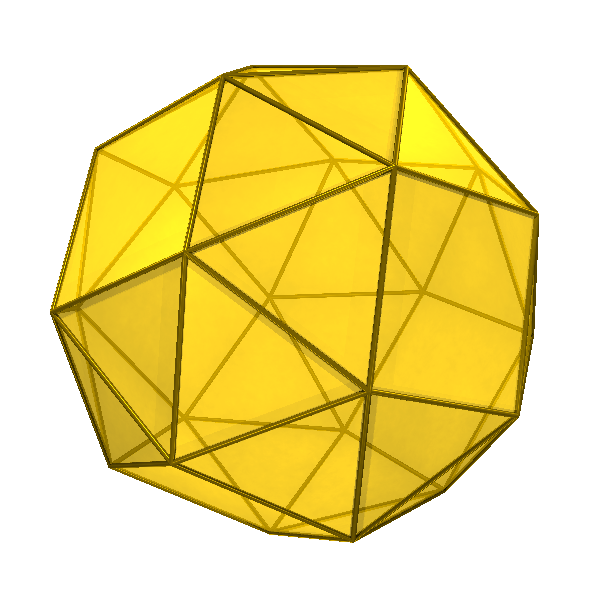}
  \caption{The Conway operation \emph{snub} applied to the cube, resulting in the Archimedean solid called \emph{snub cube}.}\label{fig:snub}
\end{figure}

In the remainder of this section we introduce a combinatorial characterization of plane graphs and the concept of \emph{chamber systems}. These allow us to define lopsp operations in \cref{sec:lopsp}. We prove that each lopsp operation can be represented by a \emph{double chamber patch}, but in contrast to the single chamber patch of a lsp operation, this one is not necessarily unique. We introduce the \emph{double chamber decoration} of an lopsp operation, which can be easily constructed from the double chamber patch but is independent of the chosen patch, and therefore unique for each lopsp operation. After some auxiliary results, we prove that the double chamber decoration is an invariant for equivalent lopsp operations. This makes it possible to identify a lopsp operation with its double chamber decoration. In \cref{sec:deco} we give a combinatorial characterization of double chamber decorations independent of the corresponding lopsp operation, and identify the double chamber decorations of \emph{2-connected} and \emph{3-connected} lopsp operations. Such a characterization is one of the first steps towards constructing a generation algorithm for lopsp operation as was done in \cite{decogen} for lsp operations. Finally, we prove that 2-connected \resp 3-connected lopsp operations preserve 2-connectivity \resp 3-connectivity, which makes it possible to see 3-connected lopsp operations as operations on polyhedra.

\subsection{Plane graphs and chamber systems}

In this paper, we will consider a plane graph as a rotation system on the set of directed edges.

\begin{definition}
  A \emph{plane graph} $G$ is a triple $(E, \sigma, \theta)$ where $E$ is a set of directed edges, $\sigma$ is a permutation of $E$ and $\theta$ is a fixed-point-free involution of $E$.
\end{definition}

This definition is equivalent to the more informal way of working with plane graphs.
The permutation $\sigma(e)$ gives the next edge with the same source vertex as $e$ in clockwise direction, and $\theta(e)$ gives the inverse edge of $e$. Note that the set of vertices is not explicitly defined, but can be retrieved as the set of orbits of $\angles{\sigma}$. The orbit corresponding to a vertex $v$ is the set of edges with source $v$. The faces correspond to orbits of $\angles{\sigma\theta}$, \ie the set of edges with the face to its left. The size of a face is the size of its corresponding orbit.

Every plane graph $G$ has an associated chamber system $C_G$ \cite{Dress1987}. This chamber system is obtained by constructing a barycentric subdivision of $G$, \ie subdividing each edge by one vertex in its center, adding one vertex in the center of each face, and adding edges from each center of a face to its vertices and centers of edges. In $C_G$, each vertex $v$ has a type $t(v) \in \{0, 1, 2\}$, indicating the dimension of its corresponding structure in $G$. Each edge $e$ has the type $t(e)$ of the opposite vertex in an adjacent triangles. A chamber system $C_G$ is a plane triangulation.

We call a pair of chambers sharing a type-$0$ edge a \emph{double chamber}. Each chamber of $C_G$ is contained in exactly one double chamber.

\begin{figure}[htp]
  \centering
  \begin{tikzpicture}[scale=0.5]
    \useasboundingbox (-5,-2.5) rectangle (11,8.5);

    \node[vertex0] (v1) at (0,6) {};
    \node[vertex0] (v2) at (6,6) {};
    \node[vertex0] (v3) at (0,0) {};
    \node[vertex0] (v4) at (6,0) {};
    \node[vertex0] (v5) at (2,4) {};
    \node[vertex0] (v6) at (4,4) {};
    \node[vertex0] (v7) at (2,2) {};
    \node[vertex0] (v8) at (4,2) {};

    \node[vertex1,scale=0.75] (e1) at (3,6) {};
    \node[vertex1,scale=0.75] (e2) at (6,3) {};
    \node[vertex1,scale=0.75] (e3) at (3,0) {};
    \node[vertex1,scale=0.75] (e4) at (0,3) {};
    \node[vertex1,scale=0.75] (e5) at (3,4) {};
    \node[vertex1,scale=0.75] (e6) at (4,3) {};
    \node[vertex1,scale=0.75] (e7) at (3,2) {};
    \node[vertex1,scale=0.75] (e8) at (2,3) {};
    \node[vertex1,scale=0.75] (e9) at (1,5) {};
    \node[vertex1,scale=0.75] (e10) at (5,5) {};
    \node[vertex1,scale=0.75] (e11) at (5,1) {};
    \node[vertex1,scale=0.75] (e12) at (1,1) {};

    \node[vertex,scale=0.75] (f0) at (3,8) {};
    \node[vertex,scale=0.75] (f1) at (3,5) {};
    \node[vertex,scale=0.75] (f2) at (5,3) {};
    \node[vertex,scale=0.75] (f3) at (3,1) {};
    \node[vertex,scale=0.75] (f4) at (1,3) {};
    \node[vertex,scale=0.75] (f5) at (3,3) {};

    \draw[edge0,thin] (f0) -- (e1) -- (f1) -- (e5) -- (f5);
    \draw[edge0,thin] (f0) .. controls (10,7) and (7,3) .. (e2) -- (f2) -- (e6) -- (f5);
    \draw[edge0,thin] (f0) .. controls (-4,7) and (-1,3) .. (e4) -- (f4) -- (e8) -- (f5);
    \draw[edge0,thin] (f0) .. controls (18,9) and (4,-8) .. (e3) -- (f3) -- (e7) -- (f5);
    \draw[edge0,thin] (f1) -- (e10) -- (f2) -- (e11) -- (f3) -- (e12) -- (f4) -- (e9) -- (f1);

    \draw[edge1] (f0) -- (v1) (f0) -- (v2)
                 (f0) .. controls (-4,8) and (-4,1) .. (v3)
                 (f0) .. controls (10,8) and (10,1) .. (v4);
    \draw[edge1] (f1) -- (v1) (f1) -- (v2) (f1) -- (v5) (f1) -- (v6);
    \draw[edge1] (f2) -- (v2) (f2) -- (v4) (f2) -- (v6) (f2) -- (v8);
    \draw[edge1] (f3) -- (v3) (f3) -- (v4) (f3) -- (v7) (f3) -- (v8);
    \draw[edge1] (f4) -- (v1) (f4) -- (v3) (f4) -- (v5) (f4) -- (v7);
    \draw[edge1] (f5) -- (v5) (f5) -- (v6) (f5) -- (v7) (f5) -- (v8);
    \draw[edge1] (f1) -- (v1) (f1) -- (v2) (f1) -- (v5) (f1) -- (v6);
    \draw[edge1] (f2) -- (v2) (f2) -- (v4) (f2) -- (v6) (f2) -- (v8);
    \draw[edge1] (f3) -- (v3) (f3) -- (v4) (f3) -- (v7) (f3) -- (v8);
    \draw[edge1] (f4) -- (v1) (f4) -- (v3) (f4) -- (v5) (f4) -- (v7);
    \draw[edge1] (f5) -- (v5) (f5) -- (v6) (f5) -- (v7) (f5) -- (v8);

    \draw[edge,ultra thick] (v1) -- (e1) -- (v2) -- (e2) -- (v4) -- (e3) -- (v3) -- (e4) -- (v1);
    \draw[edge,ultra thick] (v5) -- (e5) -- (v6) -- (e6) -- (v8) -- (e7) -- (v7) -- (e8) -- (v5);
    \draw[edge,ultra thick] (v1) -- (e9) -- (v5) (v2) -- (e10) -- (v6) (v4) -- (e11) -- (v8) (v3) -- (e12) -- (v7);
  \end{tikzpicture}
  \caption{The chamber system of the cube. Edges of type 0 are thin red, edges of type 1 are green and edges of type 2 are bold black. The green and black lines form the boundaries of the double chambers. The types of vertices can be derived from the incident edges.}\label{fig:barycentric}
\end{figure}
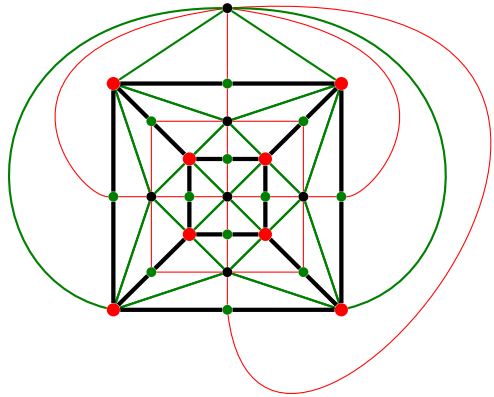

\subsection{Chiral operations}

An example of the construction of a chiral Goldberg operation is given in \cref{fig:goldberg}. A quadrangular \emph{double chamber patch} $v_1, v_0, v_2, v_0'$ consisting of the triangles $v_1, v_0, v_2$ and its counterpart $v_1, v_0', v_2$ is cut out of the hexagonal lattice $H$. Given a plane graph $G$ with chamber system $C_G$, we can glue this patch into each double chamber of $C_G$. The result is a plane graph $G'$ with the same orientation-preserving symmetries as $G$, but not necessarily the same orientation-reversing symmetries.

\begin{figure}[htp]
  \centering
  \begin{tikzpicture}[hexa/.style={shape=regular polygon,regular polygon sides=6,minimum size=1cm,draw,gray}]
    \foreach \j in {0,...,11} {
      \foreach \i in {0,...,4} {
        \node[hexa] (h\j;\i) at ({\i*3/2+3*mod(\j,2)/4},{\j*cos(30)/2}) {};
      }
    }
    \coordinate (x) at (0.5,0);
    \coordinate (y) at ({cos(60)/2},{sin(60)/2});
    \node[vertex] (v2) at ($2*(x)+2*(y)$) {};
    \node[vertex0] (v0) at ($(v2)+5*(x)+3*(y)$) {};
    \node[vertex0] (v0') at ($(v2)-3*(x)+8*(y)$) {};
    \node[vertex1] (v1) at ($0.5*(v0)+0.5*(v0')$) {};
    \node[below left] at (v2) {$v_2$};
    \node[right] at (v0) {$v_0$};
    \node[above] at (v0') {$v_0'$};
    \node[right] at (v1) {$v_1$};
    \draw[edge1] (v0) -- (v2) -- (v0');
    \draw[edge0] (v2) -- (v1);
    \draw[edge] (v0) -- (v1) -- (v0');
    \draw[very thick,dashed] (v2) -- ++($(x)+(y)$) -- ++($1.5*(x)$) -- ++($(y)-0.5*(x)$) -- ++($2*(x)-(y)$) -- ++($(x)+(y)$) -- (v0);
    \draw[very thick,dashed] (v2) -- +($2*(y)-(x)$) -- +($3.5*(y)-(x)$) -- +($4*(y)-2*(x)$) -- +($5*(y)-(x)$) -- +($7*(y)-2*(x)$) -- (v0');
    \draw[very thick,dashed] (v0) -- ++($-2*(x)$) -- ++($-2*(x)+2*(y)$) -- (v1) -- ++($0.5*(y)$) -- ++($-2*(x)+2*(y)$) -- (v0');
  \end{tikzpicture}
  \caption{The double chamber patch of a chiral Goldberg operation, with a simple path $P$ in dashed lines.}\label{fig:goldberg}
\end{figure}
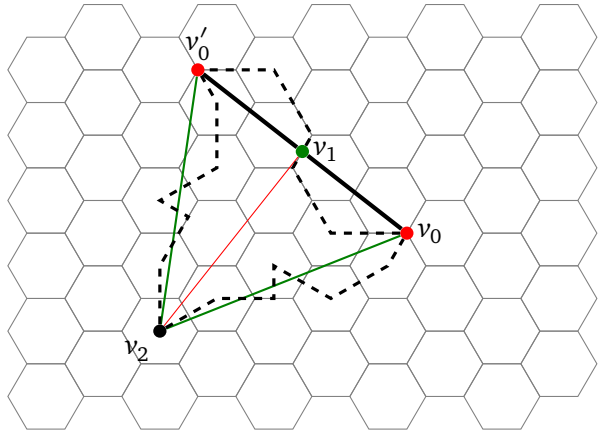

The symmetries of the hexagonal lattice ensure that after cutting and gluing the patch everything still fits together. But in order to have a combinatorial approach to the operations, we prefer to cut over a simple path $P$ in $C_H$ instead of cutting through edges and faces in arbitrary places. We will prove in \cref{lemma:simple-path} that it is always possible to find such a path.

It would be easy if we could split the double chamber patch into two separate triangles such that each triangle corresponds to the single chamber patch of an lsp operation \cite{Brinkmann2017}, and decorate each of the two types of chambers of $C_G$ with one of these two patches. Unfortunately, this is not always possible. In \cref{fig:snub-patch}, such an example is given. This is a double chamber patch for the lopsp operations \emph{snub}.

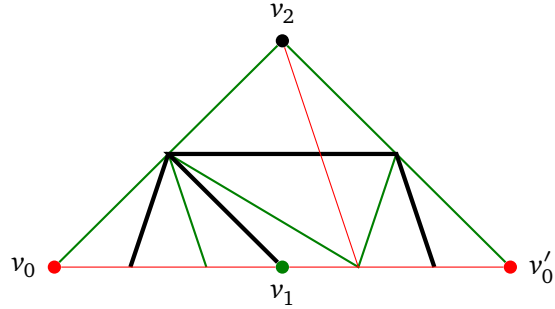
\begin{figure}[htp]
  \centering
  \begin{tikzpicture}
    \node[vertex1] (v1) at (0,0) {};
    \node[vertex] (v2) at (0,3) {};
    \node[vertex0] (v0) at (-3,0) {};
    \node[vertex0] (v0') at (3,0) {};

    \node[above=3pt] at (v2) {$v_2$};
    \node[left=3pt] at (v0) {$v_0$};
    \node[right=3pt] at (v0') {$v_0'$};
    \node[below=3pt] at (v1) {$v_1$};

    \draw[edge1] (v0) -- (v2) -- (v0') (-1,0) -- (-1.5,1.5) -- (1,0) -- (1.5,1.5);
    \draw[edge] (v1) -- (-1.5,1.5) -- (1.5,1.5) -- (2,0) (-1.5,1.5) -- (-2,0);
    \draw[edge0] (v0) -- (v1) -- (v0') (v2) -- (1,0);
  \end{tikzpicture}
  \caption{A double chamber patch for \emph{snub} (see \cref{fig:snub}).}\label{fig:snub-patch}
\end{figure}

\section{Lopsp operations}\label{sec:lopsp}

We define lopsp operations in a similar way to lsp operations, but instead of decorating each chamber with a single chamber patch, we will decorate double chambers.

\begin{definition}\label{def:lopsp}
Let $T$ be a connected tiling of the Euclidean plane with chamber system $C_T$, and let $v_0$ and $v_2$ be points in the Euclidean plane such that $v_0$ is the center of a rotation $\rho_{v_0}$ by 120 degrees in clockwise direction that is a symmetry of $T$ and $v_2$ is the center of a rotation $\rho_{v_2}$ by 60 degrees in clockwise direction that is a symmetry of $T$.

We call $(T, v_0, v_2)$ a \emph{local orientation-preserving symmetry preserving operation}, lopsp operation for short.
\end{definition}

Let $v_0' = \rho_{v_2}(v_0)$. The rotation $\rho_{v_1} = \rho_{v_2} \circ \rho_{v_0}$ is a rotation by 180 degrees with center $v_1$, and $v_0' = \rho_{v_1}(v_0)$.

In contrast to lsp operations, there is no obvious way to apply lopsp operations. We want to cut out the double chamber patch $v_2, v_0, v_1, v_0'$ and glue it into each double chamber, but the straight lines between these vertices do not always coincide with edges of $C_T$, and if we allow other cut-paths there are multiple possibilities (see \cref{fig:snub-alternative}).

\begin{figure}[htp]
  \centering
  \begin{tikzpicture}
    \node[vertex1] (v1) at (0,0) {};
    \node[vertex] (v2) at (0,3) {};
    \node[vertex0] (v0) at (-3,0) {};
    \node[vertex0] (v0') at (3,0) {};

    \node[above=3pt] at (v2) {$v_2$};
    \node[left=3pt] at (v0) {$v_0$};
    \node[right=3pt] at (v0') {$v_0'$};
    \node[below=3pt] at (v1) {$v_1$};

    \coordinate (l) at (-1,0);
    \coordinate (r) at (1,0);

    \draw[edge0] (v0) -- (v2) -- (v0') (v1) -- (1.5,1.5);
    \draw[edge1] (l) -- (v0) (l) -- (-1.5,1.5) (l) -- (v2) (l) -- (1.5,1.5) (r) -- (1.5,1.5) (r) -- (v0');
    \draw[edge] (l) -- (-2,1) (l) -- (-1,2) (l) -- (1,2) (l) -- (v1) -- (r) -- (2,1);
  \end{tikzpicture}
  \caption{Another double chamber patch for \emph{snub} (see \cref{fig:snub}).}\label{fig:snub-alternative}
\end{figure}
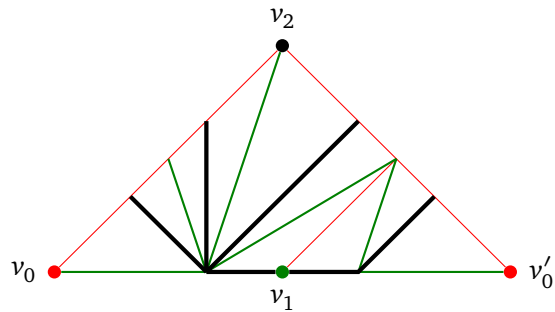

It is not difficult to imagine that no matter how we cut out this patch, the result after glueing them together will be the same. If we choose another path between $v_1$ and $v_0$ or between $v_0$ and $v_2$, we have to adapt the path between $v_1$ and $v_0'$ \resp $v_0'$ and $v_2$ accordingly, and the changes will cancel each other out when we glue the patches together.
This suggests that if we identify the vertices and edges on the border $v_1, v_0, v_2$ of the patch with the vertices and edges on the border $v_1, v_0', v_2$, the result is a triangulation of the sphere invariant under the chosen path. In \cref{fig:snub-decoration} the resulting triangulation for the \emph{snub} operation is given. We can even construct this triangulation without choosing a path.

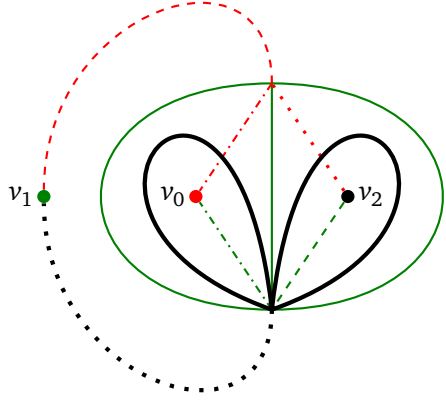
\begin{figure}[htp]
  \centering
  \begin{tikzpicture}[rotate=180]
    \useasboundingbox (-2.25,-1) rectangle (3,4);

    \node[vertex] (v2) at (-1,1.5) {};
    \node[vertex0] (v0) at (1,1.5) {};
    \node[vertex1] (v1) at (3,1.5) {};
    \coordinate (b) at (0,0);
    \coordinate (t) at (0,3);

    \node[right] at (v2) {$v_2$};
    \node[left] at (v0) {$v_0$};
    \node[left] at (v1) {$v_1$};

    \draw[edge1] (b) .. controls (-3,0) and (-3,3) .. (t);
    \draw[edge1] (b) .. controls (3,0) and (3,3) .. (t);
    \draw[edge1] (b) -- (t);
    \draw[edge1,dashed] (v2) -- (t);
    \draw[edge1,dash dot] (t) -- (v0);
    \draw[edge0,very thick,loosely dotted] (v2) -- (b);
    \draw[edge0,thick,dash dot] (b) -- (v0);
    \draw[edge0,thick,dashed] (b) .. controls (0,-2) and (3,-1) .. (v1);
    \draw[edge,loosely dotted] (t) .. controls (0,5) and (3,4) .. (v1);
    \draw[edge] (t) .. controls (-3.5,1.75) and (-0.5,-1.5) .. (t);
    \draw[edge] (t) .. controls (3.5,1.75) and (0.5,-1.5) .. (t);
  \end{tikzpicture}
  \caption{The double chamber decoration of \emph{snub} (see \cref{fig:snub}), with the simple path corresponding to the double chamber patches of \cref{fig:snub-patch,fig:snub-alternative} in dashed resp.\ dotted lines.}\label{fig:snub-decoration}
\end{figure}

For $C_T = (E, \sigma, \theta)$, consider the quotient set $\overline{E} = E / \angles{\rho_{v_0}, \rho_{v_2}}$. With $\overline{e}$ the equivalence class of $e$ in $\overline{E}$, we define
\begin{align}
  \overline{\sigma}(\overline{e}) &= \overline{\sigma(e)}, \\
  \overline{\theta}(\overline{e}) &= \overline{\theta(e)}, \\
  \overline{t}(\overline{e}) &= t(e).
\end{align}

\begin{definition}\label{def:decoration}
  The plane graph $(\overline{E}, \overline{\sigma}, \overline{\theta})$ described above together with labeling function $\overline{t}\colon \overline{E} \to \{0, 1, 2\}$ and special vertices $v_0$, $v_1$ and $v_2$ is called the \emph{double chamber decoration of the lopsp operation $(T, v_0, v_2)$}.
\end{definition}

Since for all $e_1, e_2 \in E$ with $\overline{e_1} = \overline{e_2}$ there exists a symmetry $\rho \in \angles{\rho_{v_0}, \rho_{v_2}}$ with $\rho(\sigma(e)) = \sigma(\rho(e))$, $\rho(\theta(e)) = \theta(\rho(e))$ and $\rho(t(e)) = t(\rho(e))$ such that $\rho(e_1) = e_2$, it is easy to prove that $\overline{\sigma}$, $\overline{\theta}$ and $\overline{t}$ are well-defined and $(\overline{E}, \overline{\sigma}, \overline{\theta})$ is indeed a plane graph.
Although we defined the labeling function $t$ only on edges, the types of vertices can be easily derived from the types of its incident edges.

\begin{lemma}\label{lemma:triangulation}
  The double chamber decoration of a lopsp operation is a plane triangulation.
\end{lemma}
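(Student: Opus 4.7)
The plan is to derive the triangulation property of the quotient from the triangulation property of $C_T$. Since $C_T$ is the barycentric subdivision of $T$, every face of $C_T$ has exactly three edges, so $(\sigma\theta)^3 = \mathrm{id}_E$ on $C_T$ and all orbits of $\langle \sigma\theta\rangle$ on $E$ have length three. The quotient projection $e\mapsto \overline{e}$ intertwines $\sigma,\theta$ with $\overline\sigma,\overline\theta$, hence $(\overline\sigma\,\overline\theta)^3 = \mathrm{id}_{\overline E}$, and every face of the decoration has size dividing three.

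The heart of the proof is ruling out faces of size one. Suppose for contradiction that $(\overline\sigma\,\overline\theta)^k(\overline e) = \overline e$ for some $k\in\{1,2\}$. Then there exists $\rho\in G := \langle \rho_{v_0},\rho_{v_2}\rangle$ with $\rho(e) = (\sigma\theta)^k(e)$. The edges $e$ and $(\sigma\theta)^k(e)$ bound the same triangular face $f$ of $C_T$, so the symmetry $\rho$ stabilises $f$ setwise; as an orientation-preserving Euclidean isometry, $\rho$ must then fix the centroid of $f$.

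The crux is a geometric observation about rotation centres. Every non-trivial element of $G$ is a rotation of the Euclidean plane, and its unique fixed point is a rotation centre of $T$. A short case analysis (if the fixed point lies in a face interior, the face is rotated to itself and the point is its centre; on an edge interior, the edge is rotated to itself and the point is its midpoint; otherwise, the point is a vertex of $T$) shows that any rotation centre of a Euclidean tiling is a vertex, edge-midpoint, or face-centre of that tiling, and all three are vertices of $C_T$. However, the centroid of a triangular face of $C_T$ lies in the interior of that face and is therefore not a vertex of $C_T$. This contradiction forces $\rho = \mathrm{id}$, so $(\sigma\theta)^k(e) = e$ already in $C_T$, which is impossible for $k\in\{1,2\}$ by triangularity of $C_T$.

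The main obstacle I anticipate is stating the geometric claim about rotation centres cleanly enough to invoke it without derailing the argument; beyond that the proof is a direct translation between $C_T$ and its quotient using the definitions of $\overline\sigma$ and $\overline\theta$ from \cref{def:decoration}.
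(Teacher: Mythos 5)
Your proof is correct in outline but takes a genuinely different route from the paper for the key step of excluding faces of size $1$ and $2$. The paper's argument is purely combinatorial: the three edges of each chamber of $C_T$ carry the three distinct types, the quotient preserves types (that is, $\overline{t}$ is well defined), and hence $\overline{t}\bigl((\overline{\sigma}\overline{\theta})^k(\overline{e})\bigr) \neq \overline{t}(\overline{e})$ for $k=1,2$, which immediately rules out short orbits. You instead argue geometrically: an element $\rho$ identifying $e$ with $(\sigma\theta)^k(e)$ stabilises a chamber, hence fixes its centroid, and no rotation centre of $T$ lies in the interior of a chamber. Both work, but the paper's version is shorter and avoids any appeal to the geometric realisation (boundedness and convexity of chambers, the convention that the barycentric vertices sit exactly at edge midpoints and face centres), all of which your version silently relies on. One intermediate claim of yours is false as stated: $\langle\rho_{v_0},\rho_{v_2}\rangle$ contains nontrivial translations (for instance the composition of the half-turns about $v_1$ and $v_2$), so it is not true that every nontrivial element of this group is a rotation. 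Your argument survives because you only apply the claim to an element that you have already shown to fix a point, and a nontrivial orientation-preserving planar isometry with a fixed point is necessarily a rotation; state it in that restricted form. A fully combinatorial way to finish in your framework, closer in spirit to the paper: $\rho$ preserves types, so if it stabilises a chamber it must fix each of its three vertices, contradicting the uniqueness of the fixed point of a nontrivial rotation.
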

\begin{proof}
  Since $C_T$ is a triangulation, we know that $(\sigma\theta)^3(e) = e$ for all edges $e \in E$. It follows immediately that $(\overline{\sigma}\overline{\theta})^3(\overline{e}) = \overline{e}$ for all edges $\overline{e} \in \overline{E}$.
  Since
  \begin{equation}
    \overline{t}(\overline{\sigma}\overline{\theta}(\overline{e})) = t(\sigma\theta(e)) \neq t(\theta(e)) = t(e) = \overline{t}(\overline{e}),
  \end{equation}
  it is impossible that $\overline{\sigma}\overline{\theta}(\overline{e}) = \overline{e}$ or $(\overline{\sigma}\overline{\theta})^2(\overline{e}) = \overline{e}$. Therefore, the size of each orbit of $\angles{\overline{\sigma}\overline{\theta}}$ is 3, which means that all the faces are triangles.
\end{proof}

Now that we obtained the double chamber decoration $D$ without choosing a cut-path, we can choose a path in $D$ instead of $C_T$. This is easier to do, because we do not have to take the symmetries into account. We can always find a path along the edges of $C_T$, without crossing through edges or faces.

\begin{lemma}\label{lemma:simple-path}
  If $D$ is the double chamber decoration of a lopsp operation, there exists a simple path $P$ between $v_1$ and $v_2$ through $v_0$.
\end{lemma}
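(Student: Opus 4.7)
The plan is to reduce the lemma to showing that $D$ is $2$-connected, and then apply the $k=2$ case of the fan lemma: in any $2$-connected graph, for a vertex $u$ and any two other vertices $x, y$, there exist internally vertex-disjoint paths from $u$ ending at $x$ and $y$ respectively. Taking $u = v_0$ and $\{x,y\} = \{v_1, v_2\}$ produces paths $P^1$ from $v_0$ to $v_1$ and $P^2$ from $v_0$ to $v_2$ with $V(P^1) \cap V(P^2) = \{v_0\}$; reading $P^1$ in reverse and concatenating with $P^2$ gives the desired simple path $P$ from $v_1$ through $v_0$ to $v_2$. The fan lemma at $k=2$ follows from the usual Menger's theorem by adjoining a new vertex adjacent to $x$ and $y$ and noting that the resulting graph is still $2$-connected.

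The real content is therefore establishing $2$-connectivity of $D$. By \cref{lemma:triangulation}, $D$ is a plane triangulation of the sphere, hence in particular connected, so it suffices to show $D - v$ is connected for every vertex $v$. Cycling around $v$ according to $\overline{\sigma}$, the triangles incident to $v$ come in a cyclic order, and in each such triangle the edge opposite $v$ lies in $D - v$. These opposite edges together form a closed walk (the link of $v$) visiting every neighbour of $v$, so all neighbours of $v$ lie in a single component of $D - v$. For any further vertex $w$, a shortest walk from $w$ to $v$ in $D$ is simple, so it meets $v$ only at its endpoint; its penultimate vertex is therefore a neighbour of $v$ reachable from $w$ without passing through $v$. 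Hence $D - v$ is connected, and $D$ is $2$-connected.

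The main obstacle is handling possible multi-edges and loops in $D$: these can arise because $D$ is defined as a quotient of $C_T$ by $\angles{\rho_{v_0}, \rho_{v_2}}$, so Steinitz-type arguments giving $3$-connectivity of simple plane triangulations do not apply off the shelf. The link argument above is intended to be robust to such features, but turning it into a fully rigorous proof requires carefully checking, from the combinatorial definition $(\overline{E},\overline{\sigma},\overline{\theta})$ and the triangulation identity $(\overline{\sigma}\overline{\theta})^3 = \mathrm{id}$, that ``the edge opposite $v$ in each triangle around $v$'' really does produce a single closed walk in $D - v$ that visits every neighbour of $v$. I expect this combinatorial bookkeeping, rather than either the fan lemma application or the topological idea, to be the most delicate part of the full proof.
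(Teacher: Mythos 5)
Your proposal is correct, and its second half (attach an auxiliary vertex $w$ to $v_1$ and $v_2$, apply Menger to get two internally disjoint $w$--$v_0$ paths, and splice them) is exactly the paper's argument. Where you genuinely diverge is in how the connectivity hypothesis is justified. The paper simply asserts that $D$, being a plane triangulation, is $3$-connected and hence $2$-connected -- implicitly invoking the Steinitz-type fact for \emph{simple} plane triangulations. You are right to be suspicious of that: $D$ is a quotient of $C_T$ by $\angles{\rho_{v_0},\rho_{v_2}}$ and is in general not simple (the paper's own picture of the snub decoration in \cref{fig:snub-decoration} shows parallel edges), so the off-the-shelf statement does not apply, and indeed all that is needed is $2$-connectivity of $D$. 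Your link-of-a-vertex argument supplies this directly and is robust to multi-edges; the one piece of ``bookkeeping'' you defer is immediate from the structure of $D$: every face is a triangle whose three corners carry the three distinct types $0,1,2$ (types are preserved by the quotient, so this is inherited from the barycentric subdivision $C_T$), hence a vertex $v$ occurs exactly once on each incident face, the opposite edge of each such face avoids $v$, and consecutive faces in the rotation at $v$ share an edge at $v$ and therefore a common endpoint of their opposite edges. This also rules out loops, so the walk you describe really is a closed walk in $D-v$ through all neighbours of $v$. In short: same skeleton as the paper, but your version repairs (or at least makes honest) the one step the paper glosses over, at the cost of a slightly longer proof.
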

\begin{proof}
  Since $D$ is a plane triangulation, it is 3-connected and therefore also 2-connected. It stays 2-connected if we temporarily add a vertex $w$ with edges to $v_1$ and $v_2$. By Menger's theorem \cite{Menger1927}, there exist two disjoint paths between $w$ and $v_0$. This is only possible if there are disjoint paths from $v_1$ to $v_0$ and from $v_0$ to $v_2$.
\end{proof}

We apply a double chamber decoration $D$ to a plane graph $G$ by cutting $D$ open along the simple path $P$ from the lemma above, which is the subdivided patch $v_1, v_0, v_2, v_0'$ that we glue into each double chamber of $G$. Instead of cutting and gluing, we can describe this application combinatorially.

Denote the set of directed edges on the path from $v_2$ to $v_0$ by $P_2$, and their inverses by $P_2'$. Denote the set of directed edges on the path from $v_1$ to $v_0$ by $P_1$, and their inverses by $P_1'$.

There is a one-to-one correspondence between the directed edges of a plane graph $G = (E, \sigma, \theta)$ and the double chambers of $C_G$, where each edge $e$ corresponds to the double chamber $c_e$ immediately to its left. The operations $s_1(c_e) = c_{\theta(e)}$ and $s_2(c_e) = c_{\sigma^{-1}\theta(e)}$ correspond to traversing the cyclic order around vertices of type 1 \resp 2. We call the set of double chambers of $G$ along with $s_1$ and $s_2$ the \emph{double chamber system} of $G$.

\begin{figure}[htp]
  \centering
  \begin{tikzpicture}[scale=2]
    \node[vertex] (v0) at (0, 0) {};
    \node[vertex] (v1) at (2, 0) {};
    \node[vertex] (v2) at (2, 2) {};

    \node[below left] at (v0) {$v_0$};
    \node[right=3pt] at (v1) {$v_1$};
    \node[above right] at (v2) {$v_2$};

    \draw[edge] (v1) -- (v0) -- (v2);
    \draw[thick,-Stealth] ($(v0)!0.25!(v2)+(v0)!1mm!90:(v2)$) -- node[above left] {$P_2'$} ($(v0)!0.75!(v2)+(v0)!1mm!90:(v2)$);
    \draw[thick,-Stealth] ($(v0)!0.75!(v2)+(v0)!-1mm!90:(v2)$) -- node[below right] {$P_2$} ($(v0)!0.25!(v2)+(v0)!-1mm!90:(v2)$);
    \draw[thick,-Stealth] ($(v0)!0.25!(v1)+(v0)!1mm!90:(v1)$) -- node[above] {$P_1'$} ($(v0)!0.75!(v1)+(v0)!1mm!90:(v1)$);
    \draw[thick,-Stealth] ($(v0)!0.75!(v1)+(v0)!-1mm!90:(v1)$) -- node[below] {$P_1$} ($(v0)!0.25!(v1)+(v0)!-1mm!90:(v1)$);
  \end{tikzpicture}
  \caption{A double chamber decoration with simple path $P$.}\label{fig:decoration}
\end{figure}
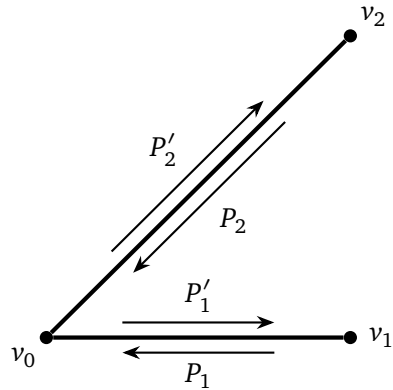

\begin{definition}\label{def:application}
  Given a plane graph $G$ with double chamber system $C$ and a double chamber decoration $D = (E, \sigma, \theta)$ with simple path $P$ satisfying \cref{lemma:simple-path}, the \emph{application of $(D, P)$ to $G$} results in a plane graph $D_P(G) = (E \times C, \sigma_P, \theta_P)$ with $\sigma_P((e, c)) = (\sigma(e), s_{P,e}(c))$ and $\theta_P((e, c)) = (\theta(e), s_{P,e}(c))$ where
  \begin{equation}
    s_{P,e} = \begin{cases}
      s_2 & \text{if } e \in P_2 \\
      s_1^{-1} & \text{if } e \in P_1 \\
      s_2^{-1} & \text{if } e \in P_2' \\
      s_1 & \text{if } e \in P_1' \\
      \1 & \text{else} \\
    \end{cases}
  \end{equation}
\end{definition}

It is possible that there is more than one simple path that satisfies \cref{lemma:simple-path}. We still have to prove that the result of the operation does not depend on the chosen path $P$. We will do that in \cref{thm:equivalence}, but we first introduce some new terminology.

Given a double chamber decoration $D$ with two simple paths $P$ and $Q$ satisfying \cref{lemma:simple-path}, consider the subgraph of $C_D$ consisting of all the edges in $P$ and $Q$ (see \cref{fig:snub-regions} for an example). In order to avoid confusion, we will refer to the faces of this subgraph as \emph{regions}. With each directed edge $e$ of $C_D$ we associate exactly one region $R_e$. If $e$ is an edge in $P$ or $Q$ we choose the region at the left-hand side of $e$, and for all other edges we choose the containing region.
A \emph{region path} $R_0, \dotsc, R_n$ is a sequence of regions such that for each $i < n$ there exists an edge $e_i \in Q \setminus P$ such that $e_i$ is associated with $R_i$ and $\theta(e_i)$ is associated with $R_{i + 1}$. A region path corresponds to the operation $r_1 \circ \dotsb \circ r_n$ with $r_i = s_{Q,e_i}$. Two region paths are called equivalent if they correspond to the same operation.

\begin{figure}[htp]
  \centering
  \begin{tikzpicture}[scale=2]
    \useasboundingbox (-0.4, -0.7) rectangle (2, 2);
    \node[vertex0] (v0) at (0, 0) {};
    \node[vertex1] (v1) at (2, 0) {};
    \node[vertex] (v2) at (2, 2) {};
    \node[below left] at (v0) {$v_0$};
    \node[right=3pt] at (v1) {$v_1$};
    \node[above right] at (v2) {$v_2$};
    \coordinate (01) at (1,0);
    \coordinate (02) at (1,1);

    \draw[edge,thick,dash dot] (01) -- (v0) -- (02);
    \draw[edge,thick,dashed] (v1) -- (01) (02) -- (v2);
    \draw[edge,very thick,loosely dotted] (02) .. controls (1.5,1) and (2,0.5) .. (v1);
    \draw[edge,very thick,loosely dotted] (01) .. controls (0,-2) and (-2,1) .. (v2);
  \end{tikzpicture}
  \caption{The regions of the double chamber decoration of \emph{snub} (see \cref{fig:snub-decoration}) with two simple paths.}\label{fig:snub-regions}
\end{figure}

\begin{lemma}\label{lemma:region}
  Given a double chamber decoration $D$ with two simple paths $P$ and $Q$ satisfying \cref{lemma:simple-path}, there exists a region $R_{P,Q}$ such that there is a region path between $R_{P,Q}$ and a region incident to $v_2$ with an associated operation of the form $s_2^k$, and a region path between $R_{P,Q}$ and a region incident to $v_1$ with an associated operation of the form $s_1^l$.
\end{lemma}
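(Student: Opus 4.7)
The plan is to define $R_{P,Q}$ from the local structure of $P \cup Q$ at $v_0$ and to establish both required region paths by tracing along $\tilde{Q}_2$ and $\tilde{Q}_1$ on a consistent side of $Q$. Since $Q$ is a simple path through $v_0$, the final edges of $Q_1$ and $Q_2$ incident to $v_0$ are distinct and cut the cyclic order around $v_0$ into two sectors. Fixing the orientation of $Q$ from $v_1$ to $v_2$, I take $R_{P,Q}$ to be the region of $P \cup Q$ containing the sector immediately to the right of $Q$ at $v_0$.

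For the region path to a $v_2$-incident region, I trace along $\tilde{Q}_2$ from $v_0$ to $v_2$, tracking at each vertex the $P \cup Q$-region on the right side of the walk. This yields a sequence beginning at $R_{P,Q}$ and ending at a region incident to $v_2$. Between consecutive vertices the region is constant; at a vertex $x$ the region can only change across a $P \cup Q$-edge lying in the right-hand arc between the incoming and outgoing $Q_2$-edges at $x$. For interior $x \in Q_2$ with $x \notin P$ no such edges exist, and at $x = v_0$ the right arc can be arranged so that it contains no $Q_1$-edge. At shared vertices $x \in P \cap Q_2$ a $P$-edge may fall in the right arc, but such a $P$-edge can be shown to be a bridge of $P \cup Q$ (since cutting it leaves one end as a dangling part of $P$ that $Q$ does not reconnect), hence does not separate two distinct regions. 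Each recorded transition is therefore a crossing of an edge in $\tilde{Q}_2 \setminus \tilde{P}$, contributing a factor of $s_2^{\pm 1}$, so the composed operation is of the form $s_2^k$. The region path to a $v_1$-incident region is constructed symmetrically, tracing along $\tilde{Q}_1$ from $v_0$ to $v_1$ on the same right-hand side.

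The main difficulty is handling configurations where $P$ and $Q$ share several intermediate vertices and edges, in which case the bridge argument above needs refinement since a $P$-branch at one shared vertex may reconnect to $Q$ at another. A clean route is an induction on $\abs{P \triangle Q}$, the symmetric difference of the edge sets, with trivial base case $P = Q$ (a single region of $P \cup Q$, incident to all of $v_0$, $v_1$, $v_2$) and reducing one chord-subpath of $\tilde{Q} \setminus \tilde{P}$ at a time in the inductive step.
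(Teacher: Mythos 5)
Your construction breaks down exactly at the point you flag as ``the main difficulty'', and the fix you gesture at is not carried out, so there is a genuine gap. The crux is your claim that a $P$-edge lying in the right-hand arc at a shared vertex $x$ of $P$ and $Q_2$ is a bridge of $P \cup Q$ and hence does not separate two regions. This is false whenever the branch of $P$ that leaves $Q_2$ on the right at $x$ returns to $Q$ at another shared vertex -- i.e.\ whenever $P$ and $Q$ genuinely cross, which is the typical situation (both $P_2$ and $Q_2$ run from $v_0$ to $v_2$ and can interleave arbitrarily often). In that case the regions immediately to the right of $Q_2$ before and after $x$ are distinct and are separated only by edges of $P$; since a region path is by definition only allowed to cross edges of $Q \setminus P$, your sequence of ``right-hand regions'' is not a region path at all, and any legal detour reconnecting the two regions crosses further $Q$-edges whose contributions you have not controlled (in particular, nothing rules out stray $s_1^{\pm 1}$ factors on the way to $v_2$). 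The closing suggestion of an induction on $\abs{P \bigtriangleup Q}$ is precisely where all the work lies and is left as a sketch: replacing a chord-subpath of $Q \setminus P$ by the corresponding piece of $P$ need not yield a simple path, and the region structure and the set of legal region-path moves both change under this reduction, so the inductive step is not routine.

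For comparison, the paper avoids defining $R_{P,Q}$ in advance. It takes an \emph{arbitrary} region path from a region incident to $v_1$ to a region incident to $v_2$ (which exists because $P$ is acyclic, so crossing only $Q\setminus P$ edges still connects all regions) and then reduces the associated word of crossings: whenever two crossings of the same $Q_t$ are consecutive along $Q_t$, the subpath of $Q_t$ between them closes up with the region path into a cycle, and a Jordan-curve argument forces any intervening crossing of $Q_s$ to be paired with an inverse crossing, so everything in between cancels. Iterating sorts the word into $s_1^k s_2^l$, and $R_{P,Q}$ is read off as the region between the last $s_1^{\pm 1}$ and the first $s_2^{\pm 1}$. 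That cancellation mechanism is exactly what is missing from your argument; if you want to keep your explicit choice of $R_{P,Q}$ at $v_0$, you would still need an argument of this pairing type to dispose of the $P$--$Q$ crossings.
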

\begin{proof}
  Choose a region path $R = R_0, \dotsc, R_n$ with $R_0$ incident to $v_1$ and $R_n$ incident to $v_2$ and associated operation $r = r_1 \dotsb r_n$. Such a region path exists because $P$ contains no cycles, so all regions are connected.
  If we add one vertex in each region and one vertex on each edge in $Q$, and an edge between a vertex in a region and a vertex on an edge if the edge is in the border of the region, this region path induces a path on these edges in a canonical way, and each operation $r_i$ corresponds to an intersection of $R$ and $Q$. An example is given in \cref{fig:region-path}.

  \begin{figure}[htp]
    \centering
    \subbottom[]{
      \label{fig:region-path}
      \begin{tikzpicture}[scale=2]
        \useasboundingbox (-0.25, -0.75) rectangle (2.5, 2.25);
        \node[vertex] (v0) at (0, 0) {};
        \node[below left] at (v0) {$v_0$};
        \node[vertex] (v1) at (2, 0) {};
        \node[right=3pt] at (v1) {$v_1$};
        \node[vertex] (v2) at (2, 2) {};
        \node[above right] at (v2) {$v_2$};

        \draw[edge,thick] (v1) -- (v0) -- (v2) node[pos=0.75,above left] {$P$};
        \draw[edge,thick,dashed] (v0) .. controls ($(v0)!0.3!75:(v2)$) and ($(v0)!0.4!45:(v2)$) .. (1, 1) node[pos=0.75,above] {$Q$} .. controls ($(v2)!0.4!45:(v0)$) and ($(v2)!0.3!75:(v0)$) .. (v2);
        \draw[edge,thick,dashed] (v0) .. controls ($(v0)!2.6!30:(v1)$) and (1.5, -2.5) .. (1, 0) .. controls (0.9, 0.5) and (2, 0.6) .. (2, 0);
        \draw[edge0,thick] (1.5, 0.2) -- (2.1, 0.5) -- (2.2, 1) node[right] {$R$} -- (1.75, 1.4);
      \end{tikzpicture}
    }
    \hspace{1em}
    \subbottom[]{
      \begin{tikzpicture}
        \useasboundingbox (-2.5,-1.5) rectangle (1,5);
        \draw[edge0,thick] (0,-0.5) -- (0,0.5) (0,4) -- (0,4.5) node[above] {$R$};
        \draw[edge0,thick,dashed] (0,0.5) -- (0,4);

        \draw[edge,thick] (0.25,0) node[black,right] {$r_i$} -- (0,0) arc[start angle=270, end angle=90, radius=2] (0,4) node[left,pos=0.5] {$Q_t$} -- (0.25,4) node[black,right] {$r_j$};

        \draw[edge,thick] (0.25,0.5) node[right] {$r_a$} -- (0,0.5) arc[start angle=270, end angle=90, radius=1] (0,2.5) node[left,pos=0.5] {$Q_s$} -- (0.25,2.5) node[right] {$r_b$};
      \end{tikzpicture}
      \label{fig:region}
    }
    \caption{Examples of region paths}
  \end{figure}
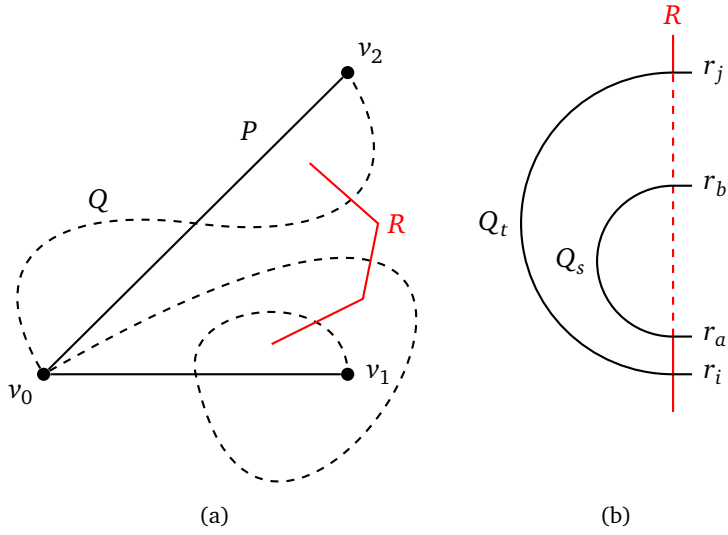

  We will prove that if $r_i$ and $r_j$ correspond to two intersections of $R$ and $Q_t$ with $t \in \{1, 2\}$ that are consecutive on $Q_t$, then $r_{i + 1} \dotsb r_{j - 1}$ is the identity operation.
  For $j = i + 1$ this is obvious. Suppose $j > i + 1$.
  The subpath of $Q_t$ between $r_i$ and $r_j$ together with the region path between $r_i$ and $r_j$ forms a closed cycle. If there is an intersection with $Q_s$ in $r_a$ with $a = i + 1$, there will be another intersection in $r_b$ with $a < b < j$ and $r_b = r_a^{-1}$, as illustrated in \cref{fig:region}.
  We can assume by induction that $r_a \dotsb r_b = r_a r_b = \1$. If $b < j - 1$, we can repeat this for $a = b + 1$ until $b = i + 1$ and thus $r_{i + 1} \dotsb r_{j - 1} = \1$.

  Take $m$ so that $r_m \in \{s_1, s_1^{-1}\}$ and $r_i \in \{s_2, s_2^{-1}\}$ for all $i > m$. If $Q_2$ crosses $R$ in $r_a$ with $1 < a < m$ and $r_i \in \{s_1, s_1^{-1}\}$ for $i < a$, it will cross again in $r_b$ with $a < b < m$, and $r_a \dotsb r_b = \1$.
  We can repeat this as long as there is an $r_c \in \{s_2, s_2^{-1}\}$ with $b < c < m$, until $r_1 \dotsb r_m = s_1^k$. Since $r_{m + 1} \dotsb r_n = s_2^l$, the region between $r_m$ and $r_{m + 1}$ satisfies the conditions of $R_{P,Q}$.
\end{proof}

We are now ready to prove that the application of a double chamber decoration $D = (E, \sigma, \theta)$ to a graph $G$ with double chamber system $C$ is independent of the chosen simple path $P$. In order to do that, we will construct an isomorphism between the plane graphs $D_P(G)$ and $D_Q(G)$, with $Q$ another simple path satisfying \cref{lemma:simple-path}. By choosing the region $R_{P,Q}$, we fix canonical points $R_{P,Q} \times C$ that will be invariant under this isomorphism.

\begin{theorem}\label{thm:equivalence}
  Given a plane graph $G$ with double chamber system $C$ and a double chamber decoration $D = (E, \sigma, \theta)$ with two simple paths $P$ and $Q$ satisfying \cref{lemma:simple-path}, there exists an isomorphism between $D_P(G)$ and $D_Q(G)$.
\end{theorem}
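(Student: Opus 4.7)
The approach is to construct an explicit isomorphism $\psi\colon D_P(G) \to D_Q(G)$ of the form $\psi(e, c) = (e, \phi_{R_e}(c))$, where $\phi_R\colon C \to C$ is a permutation associated to each region $R$ of the subgraph $P \cup Q$ of $C_D$. Fix the base region $R_{P,Q}$ supplied by \cref{lemma:region} and set $\phi_{R_{P,Q}} = \1$; for any other region $R$, choose a region path from $R_{P,Q}$ to $R$ and let $\phi_R$ be its associated operation.

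The first and main obstacle is showing well-definedness: different region paths between the same pair of regions must yield the same permutation of $C$. By planarity of $C_D$, any two such paths differ by a deformation across some finite collection of interior faces of $P \cup Q$, so it suffices to show that the operation along the boundary of each such face is the identity. The argument should parallel the crossing-cancellation template from the proof of \cref{lemma:region}: consecutive crossings of each $Q_t$ along a closed boundary cycle pair up as mutual inverses and telescope. Once this is in place, $\phi_R$ is a well-defined permutation of $C$ and $\psi$ is a bijection on $E \times C$.

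It remains to check that $\psi$ intertwines $\sigma_P$ with $\sigma_Q$ and $\theta_P$ with $\theta_Q$. Unpacking \cref{def:application}, both conditions reduce to the identity
\begin{equation*}
  \phi_{R_{\sigma(e)}} \circ s_{P,e} = s_{Q,e} \circ \phi_{R_e}
\end{equation*}
together with the analogous identity with $\theta(e)$ replacing $\sigma(e)$, for every directed edge $e$ of $C_D$. I would proceed by case analysis on the membership of $e$, $\sigma(e)$, and $\theta(e)$ in $P$ and $Q$: when nothing crosses either path, $s_{P,e} = s_{Q,e} = \1$ and $R_{\sigma(e)} = R_e$, so both sides collapse to $\phi_{R_e}$; in the crossing cases, the transition between regions is exactly one elementary region step whose associated operation matches $s_{Q,e} \circ s_{P,e}^{-1}$ by direct inspection of the definitions. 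The delicate cases concern the neighbourhoods of $v_0$, $v_1$, $v_2$, where the pure-power forms $s_1^l$ and $s_2^k$ supplied by \cref{lemma:region} ensure that the accumulated $\phi$-values are compatible with the intrinsic rotational orbit structure at the edge-centres and face-centres of $G$, so the identification closes consistently around these special points.
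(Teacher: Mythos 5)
Your proposal is essentially the paper's own proof: the map $\psi(e,c) = (e,\phi_{R_e}(c))$ is exactly the paper's $f((e,c))=(e,s_{P,Q,e}(c))$ based at the region $R_{P,Q}$ from \cref{lemma:region}, the intertwining identity $\phi_{R_{\sigma(e)}}\circ s_{P,e} = s_{Q,e}\circ\phi_{R_e}$ is precisely the relation $s_{Q,e}\,s_{P,Q,e} = s_{P,Q,\sigma(e)}\,s_{P,e}$ that the paper verifies by the same case analysis on whether $e$ lies on $P$ or $Q$, and the special vertices are handled via the pure-power forms $s_1^k$, $s_2^l$ just as in the paper. The only difference is that you make explicit the well-definedness of $\phi_R$ under change of region path, which the paper leaves implicit but which follows from the same crossing-cancellation argument used in the proof of \cref{lemma:region}.
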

\begin{proof}
  Choose a region $R_{P,Q}$ satisfying \cref{lemma:region} and consider the function
  \begin{equation}
    f\colon E \times C \to E \times C : (e, c) \mapsto (e, s_{P,Q,e}(c))
  \end{equation}
  with $s_{P,Q,e}$ the operation associated with a region path from $R_{P,Q}$ to $R_e$.
  We will first prove that $f$ is a homomorphism between $D_P(G)$ and $D_Q(G)$. Since
  \begin{align}
    \sigma(f((e, c))) &= \sigma((e, s_{P,Q,e}(c))) = (\sigma(e), s_{Q,e}s_{P,Q,e}(c)) \\
    f(\sigma((e, c))) &= f((\sigma(e), s_{P,e}(c))) = (\sigma(e), s_{P,Q,\sigma(e)}s_{P,e}(c)), \\[1em]
    \theta(f((e, c))) &= \theta((e, s_{P,Q,e}(c))) = (\theta(e), s_{Q,e}s_{P,Q,e}(c)) \\
    f(\theta((e, c))) &= f((\theta(e), s_{P,e}(c))) = (\theta(e), s_{P,Q,\theta(e)}s_{P,e}(c)),
  \end{align}
  we only have to prove that $s_{Q,e} s_{P,Q,e} = s_{P,Q,\sigma(e)} s_{P,e}$.

  Consider the case $s_{P,e} = \1$. The operation $s_{P,Q,\sigma(e)}$, corresponding to a region path from $R_{P,Q}$ to $R_{\sigma(e)}$, is equal to $s_{P,Q,e}$ followed by the operation corresponding to the region path crossing $e$, which is $s_{Q,e}$. Therefore, $s_{P,Q,\sigma(e)} s_{P,e} = s_{Q,e} s_{P,Q,e}$.

  If $s_{P,e} = s_1$, there is a region path from $R_{P,Q}$ to $R_e$ consisting of a region path from $R_{P,Q}$ to a region $R_1$ incident to $v_1$, corresponding to operation $s_1^k$, followed by a region path from $R_1$ to $R_e$, corresponding to operation $r$. Since $e \in P_1'$, the region path from $R_1$ to $R_e$ can follow the left-hand side of $P_1$.
  The region path from $R_{P,Q}$ to $\sigma(e)$ starts with the same region path to $R_1$. We can now follow the region path along the right-hand side of $P_1$ to $R_{\sigma(e)}$, corresponding to operation $r'$, after we go around $v_1$ which corresponds to operation $s_1^{-1}$. In \cref{fig:equivalence}, we see that $r'$ is equal to $r$ followed by $s_{Q,e}$. Therefore,
  \begin{equation}
    s_{P,Q,\sigma(e)} s_{P,e} = r' s_1^{-1} s_1^k s_1 = s_{Q,e} r s_1^k = s_{Q,e} s_{P,Q,e}.
  \end{equation}

  For $s_{P,e}$ equal to $s_1^{-1}$, $s_2$ and $s_2^{-1}$, the proof is similar.

  \begin{figure}[htp]
    \centering
    \begin{tikzpicture}[scale=1.5]
      \useasboundingbox (0,-0.5) rectangle (2.5,2);
      \node[vertex] (v0) at (0, 0) {};
      \node[vertex] (v1) at (2, 0) {};
      \node[vertex] (v2) at (2, 2) {};

      \draw[edge,thick,dashed] (v1) -- (v0) -- (v2);
      \draw[edge1,ultra thick,-stealth] (0.25,0) -- node[above] {$e$} (0.5,0);
      \draw[edge,thick] (0.6,-0.25) -- (0.75,0) -- (1,0) -- (1.15,0.25);
      \draw[edge,thick] (1.1,-0.25) -- (1.4,0.25);
      \draw[edge,thick] (1.35,-0.25) -- (1.5,0) -- (1.75,0) -- (1.9,-0.25);
      \draw[edge,thick] (2.5,0) -- (v1);

      \node[circle,fill=red,scale=0.25] (r) at (2,1) {};
      \node[circle,fill=red,scale=0.25] (r1) at (2,0.15) {};
      \node[above right,red] at (r) {$R_{P,Q}$};
      \draw[edge0,thick,dotted] (r) -- node[right] {$s_1^k$} (r1);
      \draw[edge0,thick,dotted,-{Latex[right]}] (r1) -- node[above=5] {$r$} (0.4,0.15);
      \draw[edge0,thick,dotted,-{Latex[left]}] (r1) arc[radius=0.15,start angle=90,end angle=-90] (2,-0.15) -- node[below=5] {$r'$} (0.4,-0.15);
    \end{tikzpicture}
    \hspace{1em}
    \begin{tikzpicture}[scale=1.5]
      \useasboundingbox (0,-0.5) rectangle (2.5,2);
      \node[vertex] (v0) at (0, 0) {};
      \node[vertex] (v1) at (2, 0) {};
      \node[vertex] (v2) at (2, 2) {};

      \draw[edge,thick,dashed] (v1) -- (v0) -- (v2);
      \draw[edge,thick] (0.65,0.25) -- (0.5,0) -- (1.25,0) -- (1.4,-0.25);
      \draw[edge1,ultra thick,-stealth] (0.75,0) -- node[above] {$e$} (1,0);
      \draw[edge,thick] (1.75,0.25) -- (1.75,0) -- (v1);

      \node[circle,fill=red,scale=0.25] (r) at (2,1) {};
      \node[circle,fill=red,scale=0.25] (r1) at (2,0.15) {};
      \node[above right,red] at (r) {$R_{P,Q}$};
      \draw[edge0,thick,dotted] (r) -- node[right] {$s_1^k$} (r1);
      \draw[edge0,thick,dotted,-{Latex[right]}] (r1) -- node[above=5] {$r$} (0.9,0.15);
      \draw[edge0,thick,dotted,-{Latex[left]}] (r1) arc[radius=0.15,start angle=90,end angle=-90] (2,-0.15) -- node[below=5] {$r'$} (0.9,-0.15);
    \end{tikzpicture}
    \hspace{1em}
    \begin{tikzpicture}[scale=1.5]
      \useasboundingbox (0,-0.5) rectangle (2.5,2);
      \node[vertex] (v0) at (0, 0) {};
      \node[vertex] (v1) at (2, 0) {};
      \node[vertex] (v2) at (2, 2) {};

      \draw[edge,thick,dashed] (v1) -- (v0) -- (v2);
      \draw[edge,thick] (0.65,0.25) -- (0.5,0) -- (1.25,0) -- (1.4,0.25);
      \draw[edge1,ultra thick,-stealth] (0.75,0) -- node[above] {$e$} (1,0);
      \draw[edge,thick] (1.75,-0.25) -- (1.75,0) -- (v1);

      \node[circle,fill=red,scale=0.25] (r) at (2,1) {};
      \node[circle,fill=red,scale=0.25] (r1) at (2,0.15) {};
      \node[above right,red] at (r) {$R_{P,Q}$};
      \draw[edge0,thick,dotted] (r) -- node[right] {$s_1^k$} (r1);
      \draw[edge0,thick,dotted,-{Latex[right]}] (r1) -- node[above=5] {$r$} (0.9,0.15);
      \draw[edge0,thick,dotted,-{Latex[left]}] (r1) arc[radius=0.15,start angle=90,end angle=-90] (2,-0.15) -- node[below=5] {$r'$} (0.9,-0.15);
    \end{tikzpicture}
    \caption{Some examples for $s_{P,e} = s_1$}\label{fig:equivalence}
  \end{figure}
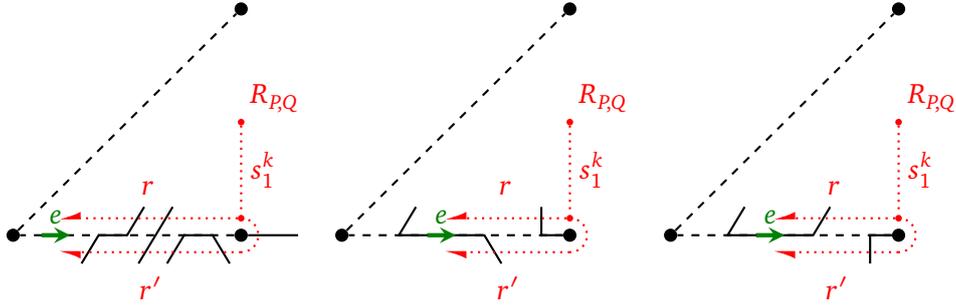

  Suppose $f((e, c)) = f((e', c'))$, \ie $(e, s_{P,Q,e}(c)) = (e', s_{P,Q,e'}(c'))$. It follows immediately that $e = e'$, and since $s_{P,Q,e}$ is a permutation we have that $c = c'$. Thus $(e, c) = (e', c')$ and $f$ is injective. For each $(e, c) \in E \times C$, $f((e, s_{P,Q,e}^{-1}(c))) = (e, c)$, and thus $f$ is surjective.

  Since $f$ is a bijective homomorphism, it is an isomorphism between $D_P(G)$ and $D_Q(G)$.
\end{proof}

\section{Double chamber decorations}\label{sec:deco}

In the previous section we constructed the double chamber decoration for a given lopsp operation. This double chamber decoration contains all the necessary information in order to apply the decoration to an embedded graph, but does not depend on the tiling $T$ or the simple path $P$ chosen to define and apply the lopsp operation. Since two lopsp operations are equivalent if and only if they have the same double chamber decoration, it is easier to work with the double chamber decorations directly instead of deriving them from lopsp operations. But in order to do that, we need a full characterization of these graphs. This is similar to what we did for lsp operations in \cite{decogen}.

\begin{theorem}
  A plane triangulation $D$ with vertex set $V$ and edge set $E$, together with a labeling function $t\colon V \cup E \to \{0,1,2\}$ and three special vertices $v_0, v_1, v_2$ is a double chamber decoration of a lopsp operation if and only if
  \begin{enumerate}
    \item for each edge $e = (v, w)$, $\{t(e), t(v), t(w)\} = \{0, 1, 2\}$
    \item for each vertex~$v$ with~$t(v) = i$, the types of the edges incident to $v$ are alternating between~$j$ and~$k$ with~$\{i, j, k\} = \{0, 1, 2\}$
    \item for each vertex~$v$ different from $v_0, v_1, v_2$
      \begin{align}
        t(v) = 1 \quad&\Rightarrow\quad \operatorname{deg}(v) = 4 \\
        t(v) \neq 1 \quad&\Rightarrow\quad \operatorname{deg}(v) > 4
      \end{align}
      and
      \begin{gather}
        t(v_0), t(v_2) \neq 1 \\
        \operatorname{deg}(v_0), \operatorname{deg}(v_2) \geq 2 \\
        t(v_1) = 1 \quad\Rightarrow\quad \operatorname{deg}(v_1) = 2 \\
        t(v_1) \neq 1 \quad\Rightarrow\quad \operatorname{deg}(v_1) \geq 4
      \end{gather}
  \end{enumerate}
\end{theorem}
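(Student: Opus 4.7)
The plan is to argue both implications. For the necessity direction ($\Rightarrow$), writing $G = \angles{\rho_{v_0}, \rho_{v_2}}$ and $D = C_T/G$, conditions (1) and (2) follow directly from the barycentric structure of $C_T$: every triangle contains one vertex of each type and around every vertex the incident edges strictly alternate between the two other types. Both properties are local and are preserved under the quotient.

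For condition (3) I would compute vertex degrees in $C_T$ and see how they descend to $D$. In $C_T$, a type-$1$ vertex (midpoint of an edge of $T$) has degree exactly $4$, whereas a type-$0$ or type-$2$ vertex (a face centre or an original vertex) has even degree at least $6$. The rotations $\rho_{v_0}$, $\rho_{v_2}$, $\rho_{v_1} = \rho_{v_2}\rho_{v_0}$ have orders $3$, $6$, $2$, so $G$ is the Euclidean triangle group of type $(2,3,6)$, whose only cone points are $v_0, v_1, v_2$. Every non-special vertex therefore has trivial stabilizer and keeps its degree, producing the dichotomy in (3). At a special vertex, the $C_T$-degree is divisible by the stabilizer order; combined with the evenness forced by (2), the fact that $3\nmid 4$ and $6\nmid 4$ rules out $t(v_0)=1$ and $t(v_2)=1$ and forces face size (resp.\ valency) $\geq 6$ at $v_0, v_2$ and $\geq 4$ at $v_1$, yielding the stated degree bounds.

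For sufficiency ($\Leftarrow$), I would reverse the construction. Applying the same Menger-theoretic argument as in \cref{lemma:simple-path} inside $D$ (whose proof uses only that $D$ is a plane triangulation), I pick a simple path from $v_1$ to $v_2$ through $v_0$ and cut $D$ open along it to obtain a patch $\Pi$ with boundary cycle $v_1, v_0, v_2, v_0'$. Treating $v_0, v_1, v_2$ as cone points of orders $3, 2, 6$ on the sphere $D$ (with $\chi(D) = 2$) gives the orbifold Euler characteristic
\begin{equation*}
  2 - \bigl(1-\tfrac{1}{3}\bigr) - \bigl(1-\tfrac{1}{2}\bigr) - \bigl(1-\tfrac{1}{6}\bigr) = 0,
\end{equation*}
so the associated orbifold is Euclidean and its universal cover is $\RR^2$. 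Unfolding copies of $\Pi$ by the resulting $(2,3,6)$ triangle-group action produces a connected triangulation $C_T$ of $\RR^2$ together with rotations of orders $3$ and $6$ about chosen points $v_0, v_2$. Conditions (2) and (3) describe precisely the local rotation system of a barycentric subdivision, so $C_T$ is the chamber system of a plane tiling $T$, and $(T, v_0, v_2)$ is a lopsp operation whose double chamber decoration equals $D$ by construction.

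The main obstacle is the universal-cover step: verifying that the combinatorial data on $D$ really does assemble into the chamber system of a plane tiling, and not merely an abstract triangulation of $\RR^2$. Concretely one must show that at every vertex of the unfolded triangulation the local rotation system agrees with that of a barycentric subdivision of some plane graph (type-$1$ vertices acting as edge midpoints, types $0$ and $2$ as face centres and original vertices). This is exactly where conditions (2) and (3) are used in full strength, and also where the matching between the degree bounds at $v_0, v_1, v_2$ and the stabilizer orders $3, 2, 6$ becomes essential.
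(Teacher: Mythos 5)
Your overall architecture matches the paper's: necessity is a local check together with a stabilizer-order computation at the three special vertices, and sufficiency cuts $D$ open along a simple path from $v_1$ through $v_0$ to $v_2$ (obtained exactly as in \cref{lemma:simple-path}) and unfolds the resulting patch over the Euclidean plane by the orientation-preserving $(2,3,6)$ triangle group. The paper realises this unfolding concretely by gluing the cut-open patch $D'$ into every double chamber of the chamber system of the hexagonal lattice $H$; that makes the existence of the order-3 and order-6 rotations about $v_0$ and $v_2$ immediate and avoids the orbifold Euler characteristic computation, but it is the same construction as your universal cover.

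There is, however, a genuine gap in your sufficiency direction, and it is not the one you flag. \cref{def:lopsp} requires $T$ to be a \emph{connected} tiling, and connectedness of the unfolded triangulation $C_T$ does not by itself give connectedness of $T$: the tiling $T$ is recovered from $C_T$ as its type-2 subgraph (with type-1 vertices smoothed away), and a priori that subgraph could fall into several components even though $C_T$ is connected. This is where essentially all of the work in the paper's proof goes: it first shows that the type-2 subgraph of the patch $D'$ is connected (via Menger's theorem, taking two vertex-disjoint paths between two vertices of the type-2 subgraph and inducting on the number of triangles enclosed by the resulting cycle), and then propagates this connectivity across any chain of chambers of $C_H$, using that each side of $D'$ carries at least two vertices, not both of type 2. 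Your proposal never addresses this. The step you do identify as the ``main obstacle'' --- that the unfolded complex is locally the barycentric subdivision of a plane graph --- is comparatively routine, since it is exactly what conditions (1)--(3) encode, and the paper also passes over it quickly; but without the connectivity argument the constructed pair $(T, v_0, v_2)$ has not been shown to satisfy \cref{def:lopsp}. A smaller imprecision in your necessity direction: to get $\operatorname{deg}(v_2) \geq 2$ you need the order-6 stabilizer to act freely on the $d$ edges (or faces) of $T$ at $v_2$, giving $6 \mid d$ and hence degree $2d/6 \geq 2$ in $D$; divisibility of the $C_T$-degree $2d$ by the stabilizer order $6$ only yields $d \geq 3$ and degree $\geq 1$.
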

\begin{proof}
  It is easy to verify that the double chamber decoration of a lopsp operation satisfies these properties.

  Given a graph $D$ that satisfies the properties, there exists a simple path $P$ between $v_1$ and $v_2$ through $v_0$, since the proof of \cref{lemma:simple-path} holds for all plane triangulations. We can cut $D$ open along this path to get a subdivided patch $D'$, and glue this patch into each double chamber of the hexagonal lattice $H$. The result will be a chamber system $C_T$ of a tiling $T$.

  We will now prove that the type-2 subgraph of $D'$, consisting of all type-2 edges, is connected. Let $u$ and $v$ be two vertices in the type-2 subgraph. Since every face of $D'$ is a cycle, $D'$ is 2-connected. Menger's theorem \cite{Menger1927} gives us that there exist two vertex-disjoint paths between $u$ and $v$. Since all faces of $D'$ except for the outer face are triangles, these two paths form a cycle with only triangles on the inside. Since $u$ is in the type-2 subgraph, it has type 0 or 1, and there is an edge $(u, u')$ of type 2 on or in the cycle.
  If $u' \neq v$, we can do the same for vertices $u'$ and $v$, and we can choose a cycle that contains less triangles than the previous one. By induction, there exists a path between $u$ and $v$ in the type-2 subgraph of $D'$.

  Given vertices $u$ and $v$ in the type-2 subgraph of $C_T$, there exists a sequence of chambers $C_0, \dotsc, C_n$ of $H$ such that two consecutive chambers $C_i$ and $C_{i + 1}$ share one side, and $u$ is contained in $C_0$ and $v$ in $C_n$. Since there are at least two vertices on each side of $D'$, and they are not both of type 2, at least one of them is in the type-2 subgraph of $C_T$. Thus, there is a type-2 path between $u$ and $v$ that passes through all chambers in the sequence $C_0, \dotsc, C_n$, and the type-2 subgraph of $C_T$ is connected. It follows immediately that $T$ is connected too.

  We can choose the vertices of one double chamber of $C_H$ in $T$ as $v_0$, $v_1$, $v_0'$ and $v_2$. Now $(T, v_0, v_2)$ satisfies \cref{def:lopsp} of a lopsp operation, and the double chamber decoration of this lopsp operation is $D$.
\end{proof}

We call a lopsp operation and the corresponding double chamber decoration $k$-connected if it is derived from a $k$-connected tiling $T$. For the following results, we need a lemma from \cite{decogen}, which we will repeat here without proof.

\begin{lemma}\label{lemma:connectivity}
  A plane graph $G$ is
  \begin{enumerate}
    \item 2\dash connected if and only if $C_G$ contains no type-1 cycles of length 2.
    \item 3\dash connected if and only if $G$ is 2\dash connected and $C_G$ contains no non-empty type-1 cycles of length 4.
  \end{enumerate}
\end{lemma}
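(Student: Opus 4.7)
The plan is to reduce both parts to standard characterizations of 2\dash{} and 3\dash connectivity of plane graphs in terms of face boundaries, and then to translate those characterizations into the combinatorics of $C_G$.

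For (1), I would start from the fact that a plane graph $G$ with at least two vertices is 2\dash connected if and only if every face of $G$ is bounded by a simple cycle, i.e., no vertex of $G$ appears more than once on any face boundary. In $C_G$, a type\dash 1 edge joins a vertex $v$ of $G$ (type 0) to the centre of an incident face $f$ (type 2), and the number of such type\dash 1 edges between $v$ and $f$ equals the multiplicity of $v$ on the boundary walk of $f$. Hence a type\dash 1 cycle of length 2 in $C_G$ (that is, a pair of parallel type\dash 1 edges) exists exactly when some vertex occurs at least twice on some face boundary of $G$, which is precisely the failure mode of 2\dash connectivity. The very small degenerate cases (a single vertex, a single edge, an isolated loop) would be handled separately.

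For (2), I would assume $G$ is already 2\dash connected, so that part (1) excludes type\dash 1 2\dash cycles. A type\dash 1 4\dash cycle in $C_G$ is then necessarily of the form $v$--$f$--$w$--$g$--$v$ with $v, w$ distinct type\dash 0 vertices and $f, g$ distinct type\dash 2 vertices, and it exists precisely when the two faces $f$ and $g$ are each incident to both $v$ and $w$. There is one trivial way this can happen: when $f$ and $g$ share the edge $vw$ of $G$, in which case the 4\dash cycle encloses on one side only the centre of $vw$, so the configuration is \emph{empty}. My reading of the non\dash emptiness condition is therefore that the 4\dash cycle is not of this trivial form, equivalently that both disks it bounds in $C_G$ contain at least one type\dash 0 vertex in their interior. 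The biconditional should then follow because a 2\dash vertex cut $\{v, w\}$ of $G$ produces such a non\dash empty 4\dash cycle (by choosing faces $f$ and $g$ incident to both $v, w$ on opposite sides of the separation), and conversely the type\dash 0 vertices trapped on the two sides of a non\dash empty 4\dash cycle are separated by removing $\{v, w\}$.

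The main obstacle I expect is the topological step in the forward direction of (2): given a 2\dash cut $\{v, w\}$ of a 2\dash connected plane graph, one must locate faces $f$ on one side and $g$ on the other side of the separation that are each incident to both $v$ and $w$. I plan to use that the cyclic rotations of edges at $v$ and at $w$ each split into arcs belonging to the two sides of the separation, and that the faces appearing at the transitions between these arcs supply the desired $f$ and $g$. Once this is established, the converse direction should be straightforward bookkeeping within $C_G$.
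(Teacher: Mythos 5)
The paper does not actually prove this lemma: it is imported verbatim from \cite{decogen} and explicitly ``repeated here without proof,'' so there is no in-paper argument to compare yours against. Judged on its own, your sketch follows what is essentially the standard (and, as far as I can tell, the intended) route: type-1 edges of $C_G$ join type-0 vertices (vertices of $G$) to type-2 vertices (face centres), a type-1 $2$\dash cycle is a pair of parallel such edges and records a vertex occurring twice on a facial walk, and a type-1 $4$\dash cycle $v$--$f$--$w$--$g$--$v$ records two faces each incident to both $v$ and $w$; your transition-of-arcs argument for locating $f$ and $g$ from a $2$\dash cut is the right one (the facial walk at a transition corner of $v$ must return via $w$ because it meets $v$ only once), and the converse follows because a path in $G$ avoiding $v$ and $w$ cannot cross the $4$\dash cycle, whose only type-0 and type-2 vertices are $v,w,f,g$. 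Two points deserve care before this becomes a full proof. First, your reading of ``empty'' (both disks contain an interior type-0 vertex) is not literally the same as ``one disk contains only the midpoint of the edge $vw$'': the discrepancy occurs when a disk contains interior type-2 vertices but no interior type-0 vertices, which forces faces bounded entirely by parallel $v$--$w$ edges; whether such a graph counts as $3$\dash connected is a multigraph convention you must pin down to match \cite{decogen}. Second, part (1) as you state it needs connectivity as a standing hypothesis (or a separate argument), since in the rotation-system formalism a disconnected graph can have every facial orbit simple and still fail to be $2$\dash connected; together with the loop and two-vertex degeneracies you already flag, this is bookkeeping rather than a conceptual gap.
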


\begin{theorem}
  If $G$ is a $k$-connected plane graph with $k \in \{1, 2, 3\}$ and $O$ is a $k$-connected lopsp operation, then $O(G)$ is a $k$-connected plane graph.
\end{theorem}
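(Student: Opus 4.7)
The plan is to invoke \cref{lemma:connectivity}, which characterises $k$\dash connectedness of a plane graph for $k \in \{2, 3\}$ by the absence of short type\dash 1 cycles in its chamber system. The case $k = 1$ I would dispose of directly: since the decoration $D$ is a connected plane triangulation (because $T$ is connected) and $G$ is connected, walks in $G$ concatenated with walks inside each glued copy of the cut patch $D'$ connect any two directed edges of $O(G)$.

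For $k \in \{2, 3\}$ I would argue by contradiction. Suppose $C_{O(G)}$ contains a type\dash 1 cycle $\gamma$ of length $2$ (respectively a non\dash empty type\dash 1 cycle of length $4$). Every chamber of $C_{O(G)}$ sits inside a unique glued copy of $D'$ that is indexed by a double chamber of $C_G$, and within that copy it occupies a unique position in $C_D$. This yields two natural projections: $\pi_G\colon C_{O(G)} \to C_G$ recording the host double chamber, and $\pi_D\colon C_{O(G)} \to C_D$ recording the position within the decoration. Traversing a type\dash 1 edge of $C_{O(G)}$ either keeps $\pi_G$ fixed (when the edge lies in the interior of a single copy of $D'$) or moves $\pi_G$ by one of the double\dash chamber operations $s_1^{\pm 1}, s_2^{\pm 1}$ (when the edge crosses the cut $P$), exactly as in \cref{def:application}.

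I would then split $\gamma$ into two cases. If $\gamma$ stays inside a single copy of $D'$, then $\pi_D(\gamma)$ is a type\dash 1 cycle of the same length in $C_D$, which lifts through the quotient $C_T \to C_D$ to a type\dash 1 cycle of length at most $\abs{\gamma}$ in $C_T$, contradicting $k$\dash connectedness of $T$ via \cref{lemma:connectivity}. Otherwise, the ordered sequence of boundary crossings of $\gamma$ reads off a closed word in $s_1^{\pm 1}, s_2^{\pm 1}$ applied to some double chamber of $G$; translating this word back to $C_G$ produces a type\dash 1 closed walk in $C_G$ of length at most $\abs{\gamma}$, from which a type\dash 1 cycle contradicting $k$\dash connectedness of $G$ can be extracted.

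The main obstacle will be the boundary behaviour near the special vertices $v_0$, $v_1$, $v_2$, where $P$ has its endpoints and its constituent arcs meet. A short type\dash 1 cycle in $C_{O(G)}$ that winds around one of these vertices can cross $P$ several times in subtle ways, so that the same chamber of $C_{O(G)}$ is reached by distinct routes differing by the local $s_1^{\pm 1}$ or $s_2^{\pm 1}$ action. One must verify that, after cancelling such local backtracks, the residual walk in $C_G$ remains a genuine cycle and, for the length\dash $4$ case relevant to $k = 3$, that it is non\dash empty. The necessary bookkeeping mirrors the region\dash path argument used in \cref{lemma:region} and the proof of \cref{thm:equivalence}, where the operations $s_{P,e}$ across boundary crossings were tracked to establish invariance under the choice of path.
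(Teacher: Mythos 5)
Your overall strategy---project a short type-1 cycle $\gamma$ of $C_{O(G)}$ either into a single copy of the patch (and contradict the connectivity of $T$) or down to $C_G$ (and contradict the connectivity of $G$)---is the same two-sided projection the paper uses, but your case split has a genuine hole in the middle. The word in $s_1^{\pm1},s_2^{\pm1}$ read off from the boundary crossings of $\gamma$ need not yield a type-1 cycle of $C_G$ that violates \cref{lemma:connectivity}: after cancelling backtracks it can collapse to a single vertex or edge, or (in the $k=3$ case) it can be an \emph{empty} type-1 cycle of length 4, which every $C_G$ contains around each edge of $G$ and which is perfectly compatible with 3-connectivity. In these situations $\gamma$ genuinely spans several adjacent copies of the patch, so your first branch does not apply either, and your proposed fix---``verify that the residual walk remains a genuine cycle and, for the length-4 case, that it is non-empty''---cannot succeed, because the residual walk really is degenerate or empty in exactly the cases that matter. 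The paper's proof spends most of its effort here: it builds the projected cycle $C'$ explicitly by a case analysis on how consecutive host double chambers meet (\cref{fig:doubleconnected}), uses 3-connectivity of $G$ only to conclude that $C'$ is empty, and then argues that $\gamma$ is confined to the union of the at most four double-chamber areas adjacent to $C'$ (\cref{fig:type1cycle}); since that union is isomorphic to the corresponding union of double chambers in $C_T$, the contradiction finally comes from $T$, with a separate argument for the residual case of a length-4 cycle surrounding a type-2 vertex.

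A smaller point: in your first branch you project $\gamma$ to $C_D$ and ``lift through the quotient $C_T \to C_D$''. That quotient is branched at $v_0$, $v_1$ and $v_2$, so a cycle of $D$ need not lift to a closed walk of $C_T$ at all. What saves you is that a cycle contained in a single copy of the cut-open patch $D'$ already lives in an isomorphic copy of a genuine subcomplex of $C_T$, so no lifting is required; the argument should be phrased that way.
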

\begin{proof}
  Suppose $O$ is derived from a tiling $T$ as in \cref{def:lopsp}.

  For $k = 1$, we know that $T$ and $G$ are connected, and it follows easily that $O(G)$ is connected too.

  For $k = 2$, we will prove that $O(G)$ is 2-connected.
  A type-1 cycle of length 2 in $C_{O(G)}$ is either completely contained in an area that was one double chamber of $C_G$ before it was subdivided by $O$, or it is split between two areas of adjacent double chambers. Both cases cannot appear, as for any double chamber (\resp any pair of adjacent double chambers) of $C_G$ there is an isomorphism between the area of this double chamber (\resp two double chambers) in $C_{O(G)}$ and the corresponding area in $T$, and $T$ is connected and thus has no type-1 cycles of length 2.
  This implies that $C_{O(G)}$ contains no type-1 cycles of length 2.

  For $k = 3$, we will prove that $O(G)$ is 3-connected.
  Consider a type-1 cycle $C$ with edges $e_1, \dotsc, e_n$ in $C_{O(G)}$. Let $C_1, \dotsc, C_n$ be a sequence of double chambers of $C_G$ such that $e_i$ is contained in the area of $C_i$ for $1 \leq i \leq n$. If $C_i = C_{i + 1}$, we can remove $C_i$ from the sequence. This results in the reduced sequence $C_1, \dotsc, C_m$.

  If $C$ has length 2, then $m \leq 2$. Thus, the cycle is contained in one or two neighbouring areas, and it should be present in the tiling $T$ too, which is impossible.

  If $C$ has length 4, then $m \leq 4$. Thus, the cycle is contained in the areas of at most 4 double chambers of $C_G$, and each double chamber has at least one vertex or edge in common with the previous and next one, but not the same for both of them. We will now construct a type-1 cycle in $C_G$ though these chambers. Depending on the position of the common elements in each double chamber, we choose type-1 edges of $C_G$ as in \cref{fig:doubleconnected}.

  \begin{figure}[htp]
    \centering
    \loosesubcaptions
    \begin{tabular}[c]{cccc}
      \subbottom[]{
        \begin{tikzpicture}
          \draw[edge,thick] (-1,0) -- (1,0);
          \draw[edge0,solid,line width=2.5,line cap=round] (-1,0) -- (0,1) -- (1,0);
        \end{tikzpicture}
        \label{fig:doubleconnected1}
      } &
      \subbottom[]{
        \begin{tikzpicture}
          \draw[edge1,dashed] (-1,0) -- (0,1);
          \draw[edge0,solid,line width=2.5,line cap=round] (0,1) -- (1,0) -- (-1,0);
        \end{tikzpicture}
        \label{fig:doubleconnected2}
      } &
      \subbottom[]{
        \begin{tikzpicture}
          \draw[edge,thick] (-1,0) -- (1,0);
          \draw[edge1,dashed] (-1,0) -- (0,1);
          \draw[edge0,solid,line width=2.5,line cap=round] (0,1) -- (1,0);
          \node[vertex0,circle,scale=1.25] (v) at (-1,0) {};
        \end{tikzpicture}
        \label{fig:doubleconnected3}
      } &
      \subbottom[]{
        \begin{tikzpicture}
          \draw[edge,thick] (-1,0) -- (1,0) -- (0,1);
          \node[vertex0,circle,scale=1.25] (v) at (-1,0) {};
          \draw[edge0,line width=2.5,line cap=round] (-1,0) -- (0,1);
        \end{tikzpicture}
        \label{fig:doubleconnected4}
      } \\[0.5cm]
      \subbottom[]{
        \begin{tikzpicture}
          \draw[edge,thick] (-1,0) -- (1,0) -- (0,1);
          \node[vertex0,circle,scale=1.25] (v) at (0,1) {};
          \draw[edge0,solid,line width=2.5,line cap=round] (-1,0) -- (0,1);
        \end{tikzpicture}
        \label{fig:doubleconnected5}
      } &
      \subbottom[]{
        \begin{tikzpicture}
          \draw[edge,thick] (0,1) -- (1,0);
          \draw[edge1,dashed] (-1,0) -- (0,1);
          \node[vertex0,circle,scale=1.25] (v) at (0,1) {};
          \draw[edge0,solid,line width=2.5,line cap=round] (-1,0) -- (1,0);
        \end{tikzpicture}
        \label{fig:doubleconnected6}
      } &
      \subbottom[]{
        \begin{tikzpicture}
          \draw[edge,thick] (-1,0) -- (0,1);
          \draw[edge1,dashed] (1,0) -- (0,1);
          \node[vertex0,circle,scale=1.25] (v) at (0,1) {};
          \draw[edge0,solid,line width=2.5,line cap=round] (-1,0) -- (1,0);
        \end{tikzpicture}
        \label{fig:doubleconnected6b}
      } &
      \subbottom[]{
        \begin{tikzpicture}
          \draw[edge,thick] (0,1) -- (1,0) -- (-1,0);
          \draw[edge1,dashed] (-1,0) -- (0,1);
          \node[vertex0,circle,scale=1.25] (u) at (-1,0) {};
          \node[vertex0,circle,scale=1.25] (v) at (0,1) {};
        \end{tikzpicture}
        \label{fig:doubleconnected7}
      } \\[0.5cm]
      \subbottom[]{
        \begin{tikzpicture}
          \draw[edge,thick] (-1,0) -- (1,0);
          \draw[edge1,dashed] (-1,0) -- (0,1) -- (1,0);
          \node[vertex0,circle,scale=1.25] (u) at (-1,0) {};
          \node[vertex0,circle,scale=1.25] (v) at (1,0) {};
        \end{tikzpicture}
        \label{fig:doubleconnected8}
      } &
      \subbottom[]{
        \begin{tikzpicture}
          \draw[edge,thick] (-1,0) -- (0,1) -- (1,0);
          \node[vertex0,circle,scale=1.25] (u) at (-1,0) {};
          \draw[edge0,solid,line width=2.5,line cap=round] (-1,0) -- (1,0);
        \end{tikzpicture}
        \label{fig:doubleconnected9}
      } &
      \subbottom[]{
        \begin{tikzpicture}
          \draw[edge1,dashed] (-1,0) -- (0,1) -- (1,0);
          \node[vertex0,circle,scale=1.25] (u) at (-1,0) {};
          \draw[edge0,solid,line width=2.5,line cap=round] (-1,0) -- (1,0);
        \end{tikzpicture}
        \label{fig:doubleconnected9b}
      }
    \end{tabular}
    \caption{The thick red vertices and edges are the ones in common with the previous and next double chambers. The dashed green edges are part of the chosen cycle. The choice between \subcaptionref{fig:doubleconnected6} or \subcaptionref{fig:doubleconnected6b} and \subcaptionref{fig:doubleconnected9} or \subcaptionref{fig:doubleconnected9b} depends on the choice in the double chamber below (the cycle has to be connected).}\label{fig:doubleconnected}
  \end{figure}
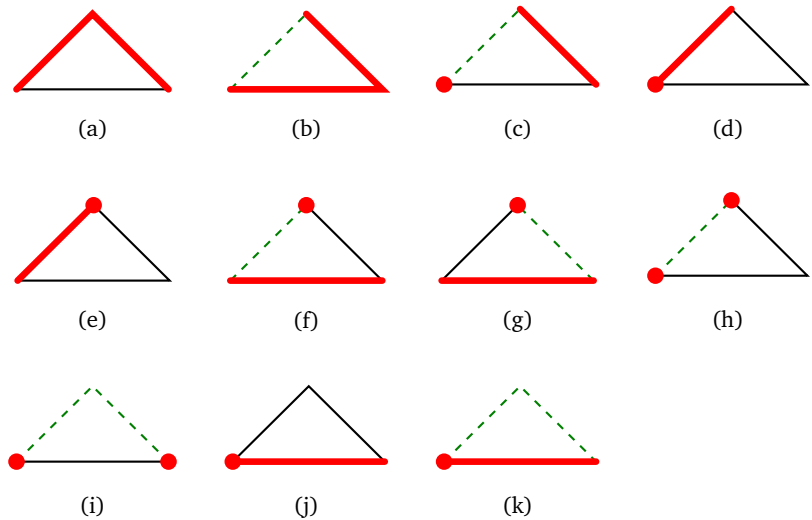

  This results in at most 4 edges that form a type-1 cycle or single edge $C'$ in $C_G$. In \cref{fig:doubleconnected8,fig:doubleconnected9b} we choose two edges, but since $v_0$ and $v_0'$ are of the same type, the path between them on the cycle $C$ has to be at least of length 2 too. If $C'$ is a type-1 cycle, it has to be empty since $G$ is 3-connected. Thus, the situation is as in \cref{fig:type1cycle1}.

  The type-1 cycle $C$ in $C_{O(G)}$ is completely contained in the areas of double chambers of $C_G$ adjacent to $C'$. The only situation where this would not necessarily imply a type-1 cycle in $C_T$ is when $C$ is a cycle of length 4 surrounding a type-2 vertex. This implies that $C$ passes though 3 or 4 areas corresponding to double chambers of $C_G$, as illustrated in \cref{fig:type1cycle3,fig:type1cycle4}. There are at least two areas that contain only one edge of $C$. But since all the areas are isomorphic, it is easy to see that this is impossible.

  \begin{figure}[htp]
    \centering
    \subbottom[]{
      \begin{tikzpicture}
        \draw[edge,thick,dashed] (0,0) -- (0,2) -- (-2,2) -- (-1,1) -- (-2,0) -- (2,0) -- (1,1) -- (2,2) -- (0,2);
        \draw[edge,thick,dashed] (0,2) -- (-0.5,2.5) (0,2) -- (0,2.5) (0,2) -- (0.5,2.5);
        \draw[edge,thick,dashed] (0,0) -- (-0.5,-0.5) (0,0) -- (0,-0.5) (0,0) -- (0.5,-0.5);
        \draw[edge,thick,dashed] (-1,1) -- (-1.5,1.125) (-1,1) -- (-1.5,0.875);
        \draw[edge,thick,dashed] (1,1) -- (1.5,1.125) (1,1) -- (1.5,0.875);
        \draw[edge1,very thick] (0,0) -- (-1,1) -- (0,2) -- (1,1) -- cycle;
      \end{tikzpicture}
      \label{fig:type1cycle1}
    }
    \hspace{2em}
    \subbottom[]{
      \begin{tikzpicture}[scale={4/3}]
        \useasboundingbox ({-sqrt(3)/2}, -0.875) rectangle ({sqrt(3)/2}, 1.375);
        \draw[edge,thick,dashed] (90:1) -- (-30:1) -- (-150:1) -- (90:1) (90:1) -- (0,0) -- (-150:1) (0,0) -- (-30:1);
        \draw[edge1,very thick] (-150:0.75) -- (-30:0.4);
        \draw[edge,thick] (-30:0.75) -- (90:0.4) (90:0.75) -- (-150:0.4);
      \end{tikzpicture}
      \label{fig:type1cycle3}
    }
    \hspace{2em}
    \subbottom[]{
      \begin{tikzpicture}
        \useasboundingbox (-1, -1.5) rectangle (1, 1.5);
        \draw[edge,thick,dashed] (-1,-1) -- (-1,1) -- (1,1) -- (1,-1) -- (-1,-1) -- (1,1) (-1,1) -- (1,-1);
        \draw[edge1,very thick] (-0.75,-0.75) -- (0.4,-0.4);
        \draw[edge,thick] (0.75,-0.75) -- (0.4,0.4) (0.75,0.75) -- (-0.4,0.4) (-0.75,0.75) -- (-0.4,-0.4);
      \end{tikzpicture}
      \label{fig:type1cycle4}
    }
    \caption{The type-1 cycle $C'$ with adjacent double chambers.}\label{fig:type1cycle}
  \end{figure}
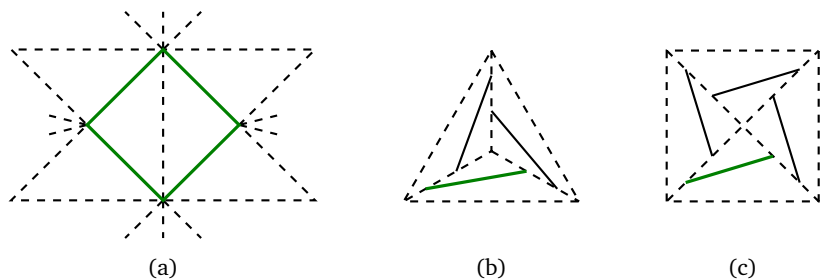
\end{proof}

This theorem is particularly interesting for $k = 3$, for which it says that 3\dash connected lopsp operations are operations on polyhedra.

\section*{Acknowledgements}

The authors wish to thank the anonymous referee whose suggestions greatly improved the presentation of this work.

\printbibliography

\end{document}